\newif\ifdraft
\newcommand{\R}{\mathbb{R}}
\newcommand{\IR}{\mathbb{IR}}
\newcommand{\Z}{\mathbb{Z}}
\newcommand{\x}{\mathbf{x}}
\newcommand{\ub}{\mathbf{u}}
\newcommand{\C}{\mathcal{C}}
\newcommand{\Kzcbf}{K_{\mathrm{zcbf}}}
\newcommand*\diff{\mathop{}\!\mathrm{d}}
\newcommand{\ubold}{{\bf u}}
\newcommand{\xbar}{x}
\newcommand{\la}{\lambda}
\newtheorem{definition}{\bfseries Definition}
\newtheorem{proposition}{\bfseries Proposition}
\newtheorem{example}{\bfseries Example}
\newtheorem{assumption}{\it Assumption}
\newtheorem{corollary}{\bfseries Corollary}
\newtheorem{remark}{\bfseries Remark}
\newtheorem{problem}{\bfseries Problem}
\title{\LARGE \bf  Control Barrier Function Meets Interval Analysis: Safety-Critical Control with Measurement and Actuation Uncertainties}
\author{Yuhao Zhang, Sequoyah Walters, Xiangru Xu
\thanks{Y. Zhang, S. Walters and X. Xu are with the Department of Mechanical Engineering, University of Wisconsin-Madison, Madison, WI, USA. Email: {\tt\small \{yuhao.zhang2,snwalters2,xiangru.xu\}@wisc.edu}.}
}
\begin{document}

\maketitle

\begin{abstract}
This paper presents a framework for designing provably safe feedback controllers for sampled-data control affine systems with measurement and actuation uncertainties. Based on the interval Taylor model of nonlinear functions, a sampled-data control barrier function (CBF) condition is proposed which ensures the forward invariance of a safe set for sampled-data systems. Reachable set overapproximation and Lasserre's hierarchy of polynomial optimization are used for finding the margin term in the sampled-data CBF condition. Sufficient conditions for a safe controller in the presence of measurement and actuation uncertainties are proposed, for CBFs with relative degree 1 and higher relative degree individually. The effectiveness of the proposed method is illustrated by two numerical examples and an experimental example that implements the proposed controller on the Crazyflie quadcopter in real-time.
\end{abstract}

\section{Introduction}\label{sec:intro}


Designing feedback controllers  that enforce safety
specifications is a recurring challenge in many real
systems such as automotive and robotic systems. 
Safety conditions are normally specified in terms of forward invariance of a set, which can be established through the barrier function (or barrier certificate) without finding trajectories of a system \cite{blanchini2015book,aubin2009viability,prajna2007framework,prajna2004safety}. For control systems, controlled invariant sets are used to encode the correct behavior of the closed-loop system and characterize a set of feedback control laws that will achieve it \cite{wolff2005invariance,kimmel2017invariance}. 
Inspired by the automotive safety-control problems,  \cite{aaron2014cbf,Xu2015ADHS,aaron2016barriertac} proposed reciprocal and zeroing control barrier functions (CBFs) which extend previous barrier conditions to only requiring a single sub-level set controlled invariant.
Families of control policies that guarantee safety can be obtained by solving a convex quadratic program  (QP). This CBF-QP framework has been used in applications such as automotive safety systems, bipedal robots, quadcopters, robotic manipulators, and multi-agent systems 
\cite{xu2017correctness,Hsu15Backstepping,nguyen2016optimal,wang2018safe,cortez2021control,pickem2017robotarium}. 

CBFs proposed in \cite{aaron2014cbf,Xu2015ADHS,aaron2016barriertac} provide forward invariance guarantees for the safety set in the continuous-time sense, but require  the control law being updated  continuously. This requirement is difficult to realize in practice because most controllers are digitally implemented in a sampled-data fashion and the time to solve the convex QP is not negligible in safety-critical applications. Therefore, new conditions are needed to ensure the forward invariance for sampled-data systems with piecewise-constant controllers (also called zero-order-hold controllers). CBFs for sampled-data systems have been investigated in \cite{cortez2021control,ghaffari2018safety,yang2019self,gurriet2019realizable,singletary2020control,breeden2021control}. These existing works are either designed only for a specific type of systems or based on non-convex optimization which is not suitable for real-time control implementations. On the other hand, almost all the existing results using CBFs rely on accurate state and actuation information, which is difficult to obtain in practice. In \cite{dean2020guaranteeing}, A measurement-robust CBF was proposed for the safety of learned perception modules; in  \cite{takano2018robust}, an unscented Kalman filter is integrated with CBF-QP to attenuate the effects of state disturbances and measurement noises. In spite of these interesting results, a systematic approach to handle measurement and actuation uncertainties for the CBF-based safe controller is still lacking, especially one that is suitable for real-time applications.

This paper presents a safety control design framework for sampled-data systems in the presence of measurement and actuation uncertainties, by leveraging tools from interval analysis and CBFs. The contributions  are at least threefold: (i) Based on the interval Taylor model of nonlinear functions, a sampled-data control barrier function (SDCBF) condition is proposed to guarantee the forward invariance of a safe set for sampled-data systems. CBFs with a relative degree 1 and a higher relative degree are considered individually. (ii) Sufficient conditions for CBF-based sampled-data safe controller in the presence of inaccurate state measurement and actuation are proposed. (iii) Efficient algorithms are proposed to compute the new SDCBF conditions utilizing interval arithmetic and polynomial optimization techniques, and implemented on the Crazyflie quadcopter hardware.

The remainder of the paper is laid out as follows: Section \ref{sec:moti} introduces preliminaries on CBFs and interval arithmetic, and presents the problem statement. Section \ref{range_bound} introduces the SDCBF condition and a framework to compute the condition  efficiently. Section \ref{uncertain} presents a CBF-based safe controller for sampled-data systems in the presence of inaccurate state measurement and actuation. Two numerical simulations and a quadcopter experiment  are shown in Section \ref{simulation} before the paper is concluded in Section \ref{conclude}.

\section{Preliminaries and Problem Statement}\label{sec:moti}

\subsection{Control Barrier Functions}

Consider a control affine system 
\begin{eqnarray}
\label{eqnsys}
\dot{x} = F(x,u):=f(x) + g(x) u,
\end{eqnarray}
where $x \in \R^n$, $u \in U \subset \R^m$, and $f:\R^n\rightarrow \R^n$ and $g:\R^n\rightarrow \R^m$ are locally Lipschitz continuous. Given a control signal $u(t)$, the solution of \eqref{eqnsys} at time $t$ with initial condition $x_0\in\mathbb{R}^n$ at time $t_0$ is denoted by $x(t;t_0,x_0)$ or simply $x(t)$ when $t_0,x_0$ are clear from context. For simplicity,  assume that the solution of  \eqref{eqnsys} exists for all $t\geq t_0$. A set $\mathcal{S}$ is called (forward) controlled invariant with respect to system \eqref{eqnsys} if for every $x_0 \in \mathcal{S}$, there exists a control signal $u(t)$ such that $x(t;t_0,x_0) \in \mathcal{S}$ for all $t \geq t_0$. The set $\mathcal{S}$ is called safe if it is controlled invariant.

The (forward) \emph{reachable set} of system \eqref{eqnsys} from an initial set $\mathcal{X}_0 \subset \mathbb{R}^n$ at time $t>t_0$ is defined as $\mathcal{R}(t,\mathcal{X}_0)\triangleq\{x(t;t_0,x_0)\mid x_0\in\mathcal{X}_0,u\in U\}$. The (forward) \emph{reachable tube} of system \eqref{eqnsys} from an initial set $\mathcal{X}_0$ over a time interval $[t_1,t_2]$ where $t_2>t_1\geq t_0$ is $\mathcal{R}([t_1,t_2],\mathcal{X}_0)\triangleq\{x(t;t_0,x_0)\mid x_0\in\mathcal{X}_0,t\in[t_1,t_2],u\in U\}.$   

Consider a safe set $\mathcal{C} \subset \R^n$ defined by 
\begin{equation}\label{setc}
    \mathcal{C}= \{ x \in \R^n : h(x) \geq 0\}
\end{equation}
for a continuously differentiable function $h: \R^n \to \R$. A continuous function $\alpha: (-b,a) \rightarrow (-\infty,\infty)$ for some $a, b > 0$ belongs to \emph{extended class} $\mathcal{K}$ if it is strictly increasing and $\alpha(0)=0$. Given a function $h$ with a relative degree 1, it is called a zeroing CBF if  there exists an extended class $\mathcal{K}$ function $\alpha$ such that
\begin{align}\label{ineqZCBF}
& \sup_{u \in U}  \left[ L_f h(x) + L_g h(x) u + \alpha(h(x))\right] \geq 0
\end{align}
where $L_fh(x)=\frac{\partial h}{\partial x}f(x)$ and $L_gh(x)=\frac{\partial h}{\partial x}g(x)$ are Lie derivatives \cite{Xu2015ADHS}. In this paper, we will assume $\alpha(h(x)) = \gamma h(x)$ for a positive constant $\gamma$. However, results of this paper can be naturally extended to any continuously differentiable extended class $\mathcal{K}$ function $\alpha$. 
The set of control inputs that satisfy \eqref{ineqZCBF} for all $x\in\R^n$ is defined as 
\begin{align}
\Kzcbf(x) \!=\!  \{ u \in U\mid L_f h(x) \!+\! L_g h(x) u \!+\! \gamma h(x) \geq 0\}.    
\end{align}
It was proven in \cite{Xu2015ADHS} that any locally Lipschitz continuous controller $u(x) \in \Kzcbf(x)$ for every $x\in\R^n$ will guarantee the forward invariance (or safety) of $\mathcal{C}$. The safe control law is obtained by solving the following convex CBF-QP:
\begin{align}
u^*(\xbar) & =   \underset{u\in U}{\operatorname{argmin}}   \quad \|u-u_{nom}\|_2 \label{eqn:QPCLFCBF}\tag{CBF-QP}\\
\mathrm{s.t.} & \; L_f h(\xbar) + L_g h(\xbar) u + \gamma h(\xbar)\geq 0,\;\forall x\in\R^n,\nonumber
\end{align}
where $u_{nom}$ is a nominal  controller that is potentially unsafe.

Given a function $h(x)$ that is $C^r (r\geq 2)$  and has a relative degree $r$, it is called a zeroing CBF if  there exists a column vector ${\bf a}\in\R^r$ such that $\forall x\in\R^n$,
\begin{align}\label{ineq:ZCBF2}
	& \sup_{u \in U}  [L_g L_f^{r-1}h(x)u +L_f^rh(x)+{\bf a}^{\top}\eta(x) ] \geq 0
\end{align}
where $\eta(x)=[L_f^{r-1}h, L_f^{r-2}h,...,h]^{\top}\in\R^r$, and ${\bf a}=[a_1,...,a_r]^{\top}\in\R^r$ is chosen such that the roots of $p_0^r(\la)=\la^r+a_1\la^{r-1}+...+a_{r-1}\la+a_r$ are all negative reals $-\la_1,...,-\la_r<0$. 
Define functions $s_k(x(t))$ for $k=0,1,...,r-1$ as follows:
\begin{align}\label{liftedh}
s_0(x(t))&=h(x(t)),\;s_{k}(x(t))=(\frac{\diff}{\diff t}+\la_k)s_{k-1}.
\end{align}
It was shown in \cite{nguyen2016exponential} that if $s_k(x(0))\geq 0$ for $k=0,1,...,r-1$, then any controller $u(x) \in \{ u \in U : L_g L_f^{r-1}h(x)u +L_f^rh(x)+{\bf a}^{\top}\eta(x) \geq 0\}$ that is locally Lipschitz  will guarantee the forward invariance of $\mathcal{C}$. The safe controller is obtained by solving a QP similar to \eqref{eqn:QPCLFCBF}.

\subsection{Interval Arithmetic}

A real interval $[a]=[\underline{a},\bar a]$ is a subset of $\R$. 
The set of all real intervals of $\R$ is denoted as $\IR$. The set of n-dimensional real interval vectors is denoted by $\IR^n$. 
Real arithmetic operations on $\R$ can be extended to $\IR$ as follows \cite{moore1966interval}: for $\circ \in\{+,-,*,\div\}$,
\begin{align*}
[a]\circ[b]=\{\inf_{x\in[a],y\in[b]}x\circ y, \sup_{x\in[a],y\in[b]}x\circ y\}.
\end{align*}
Classical operations for interval vectors
 are direct extensions of the same operations for real vectors  \cite{jaulin2001interval,moore2009introduction}.


\subsection{Problem Statement}
There exists a gap between the theoretical safety guarantee provided by \eqref{eqn:QPCLFCBF} and the constraint satisfaction in real control implementations. The safety guarantee provided by the control input $u^*(x)$ from \eqref{eqn:QPCLFCBF} is predicated on the following implicit ``assumptions'': (1) the time to solve the QP is negligible so that the controller can be updated continuously; (2) the accurate state information is known; (3) the actuator is perfect such that the exact control input generated by  \eqref{eqn:QPCLFCBF} is implemented by the actuator. 
However, these ``assumptions'' can hardly be satisfied in reality:  modern systems are predominantly based on digital electronics, which means that the input can only be updated at discrete time instances; the state information is usually obtained from sensors and  contaminated with unknown noise; the desired control command is not perfectly achievable by real-life  actuators. 


For a sampled-data system, the sampling instants are described by a
sequence of strictly increasing positive real numbers $\{t_k\}$, $k\in\Z_{\geq 0}$, where $t_0=0$, $t_{k+1}-t_k>0$, $\lim_{k\rightarrow\infty}t_k=\infty.$ 
Define the sampling interval between $t_k$ and $t_{k+1}$ as 
$$
\Delta_k=t_{k+1}-t_k.
$$
The sampling mechanism is called a periodic sampling if $\Delta_k$ are the same for all $k$, and an aperiodic sampling  otherwise. 
At each sampling instance $t_k$, the state and input of the system are denoted as $x_k = x(t_k)$ and $u_k = u(t_k)$, respectively. The control input $u(t)$ is assumed to be a piecewise constant signal with respect to $\{t_k\}$, i.e.,
\begin{align}
u(t)=u_k,\;\forall t\in[t_k,t_{k+1}).\label{eqinput}
\end{align} 
At each sampling time $t_k$, the control input $u_k$ is chosen from the set $\Kzcbf(x_k)$, i.e., 
\begin{align}
L_f h(x_k) + L_g h(x_k) u_k + \gamma h(x_k) \geq 0.\label{eqsampled}
\end{align}
The CBF condition in \eqref{eqsampled} may not be satisfied during inter-sampling times $[t_k,t_{k+1})$, and therefore, it may not hold in the continuous-time sense, which means that the forward invariance of the safe set $\mathcal{C}$ may not be guaranteed. 

Consider system \eqref{eqnsys} and a safe set $\mathcal{C}$ shown in \eqref{setc}. Suppose that the state measurement and actuation are perfect. The first problem that will be studied in this paper is stated as follows.

\begin{problem}\label{problem}
\emph{Design a sampled-data CBF-QP controller shown in \eqref{eqinput} that renders the set $\mathcal{C}$ forward invariant.}
\end{problem}


 The second problem considers system \eqref{eqnsys} with inaccurate state estimation and imperfect actuation. We assume that we only have access to an estimate $\hat x_k$ of the true system state $x_k$ such that $\hat x_k$ belongs to a bounded state uncertainty set centered at $x_k$. Similarly, we assume the real input produced by the imperfect actuator belongs to a bounded input uncertainty set centered at the desired input that is generated from a CBF-QP.

\begin{problem}\label{problem2}
\emph{With inaccurate state estimation and imperfect actuation, design a sampled-data CBF-QP controller shown in \eqref{eqinput}  that renders the set $\mathcal{C}$ forward invariant.}
\end{problem}

\section{Sampled-Data CBF Condition Based On Interval Analysis}\label{range_bound}

This section presents a framework based on interval analysis to solve Problem \ref{problem} with perfect state measurement and actuation.


\subsection{Interval Taylor Model of Nonlinear Functions}

Solving Problem \ref{problem} involves the computation of the range of functions using interval arithmetic. The simplest method is to directly apply interval arithmetic to each term of the function \cite{jaulin2001interval,moore2009introduction}; though fast, this method often results in rather conservative bounds.  Instead, the interval Taylor model will be utilized in this paper to obtain tighter bounds of the range of functions \cite{makino2003taylor,berz1998computation,makino2009rigorous}.


\begin{definition}\label{defTM}[Def. 1 in \cite{makino2003taylor}]
Let $f: S\subset\R^n\rightarrow \R$ be a function that is $(n+1)$ times continuously partially differentiable on an open set containing the domain $S$. Let $x_0$ be a point in $S$ and $P^n$ the $n$-th order
Taylor polynomial of $f$ around $x_0$. Let $I$ be an interval such that
\begin{align*}
f(x) \in P^n\left(x-x_{0}\right)+I,\;\forall x \in S.
\end{align*}
Then the pair $(P^n,I)$ is called an $n$-th order \emph{Interval Taylor model} of $f$ around $x_0$ on $S$. 
\end{definition} 

In general, the reminder interval $I$ will be smaller with a larger value of $n$.

\subsection{SDCBF with Relative Degree 1 }
The main idea of solving Problem \ref{problem} is to design a margin term $\phi(x_k,\Delta_k)$ that accounts for the difference between the continuously updated controller and the sampled-data controller, and add it to the CBF condition \eqref{eqsampled} such that the piecewise-constant controller as shown in \eqref{eqinput} can guarantee controlled invariance of set $\C$ in continuous time (i.e., $h(x(t))\geq 0$ for all $t\geq 0$ whenever $h(x(0))\geq 0$).  
Recall that $x_k = x(t_k)$. Given a CBF $h$ with relative degree 1, we call the following inequality
\begin{align}
L_f h(x_k) + L_g h(x_k) u_k + \gamma h(x_k) + \phi(x_k,\Delta_k) \geq 0\label{SDCBF}
\end{align}
the \emph{sampled-data CBF} (SDCBF) condition at sampling instance $t_k$.  Define the following set 
\begin{align}
\Kzcbf^s(x_k,\Delta_k) =  \{  u \in &U \mid L_f h(x_k) + L_g h(x_k) u \nonumber\\
&+ \gamma h(x_k) + \phi(x_k,\Delta_k)\geq 0\} \label{inputsetsample}
\end{align}
as the sampled-data admissible input set for the sampled state $x_k$ and the sampling interval $\Delta_k$.  Define a function 
\begin{align}\label{Deltaxi}
\Delta \xi(x,u,x_k)=\xi(x,u) - \xi(x_k,u)
\end{align}
where 
$$
\xi(\cdot,u)\triangleq L_{f} h(\cdot)+L_{g} h(\cdot) u+\gamma h(\cdot).  
$$
Define $z=(x^\top,u^\top)^\top$. For any given sampling interval $\Delta_k>0$, define the set $\mathcal{Z}_k$ as the Cartesian product of the reachable tube $\mathcal{R}([t_k,t_k+\Delta_k],x_k)$ and the admissible set of the input $U$, i.e.,
\begin{align}\label{eqz}
\mathcal{Z}_k\triangleq \mathcal{R}([t_k,t_k+\Delta_k],x_k)\times U.
\end{align}
Then we have the following result that solves Problem \ref{problem}.

\begin{proposition}\label{prop1}
Consider control system \eqref{eqnsys} and a set $\mathcal{C} \subset \R^n$ defined by \eqref{setc} for a $C^1$ function $h: \R^n \to \R$ that has a relative degree 1.  
Suppose that $z_k^*=({x_k^*}^{\top},{u_k^*}^{\top})^{\top}$ is a given state-input pair in the set $\mathcal{Z}_k$, i.e, $z_k^*\in \mathcal{Z}_k$, and $(P^n_{k},I_k)$ is the $n$-th Taylor model of $\Delta \xi(x,u,x_k)$ around $z_k^*$, i.e.,
\begin{align}
\Delta \xi(x,u,x_k)\in P^n_{k}(z-z_k^*)+I_k, \;\forall z \in \mathcal{Z}_k.\label{eqtaylor}
\end{align} 
Suppose that $\phi(x_k,\Delta_k)$ is chosen to be \begin{align}
\phi(x_k,\Delta_k)=\underline I_k+\phi^*_k \label{eqphi}    
\end{align} 
where $\underline I_k$ is the lower bound of $I_k$, $\phi^*_k=\min_{z\in \mathcal{Z}_k}P^n_{k}(z-z_k^*)$, and the resulting set $\Kzcbf^s(x_k,\Delta_k)$ is non-empty. If $h(x(0))\geq 0$, then any input $u(t)=u_k,t\in[t_k,t_k+\Delta_k)$ such that  $u_k\in \Kzcbf^s(x_k,\Delta_k)$ 
will render $h(x(t))\geq 0$ for all $t\geq 0$. 
\end{proposition}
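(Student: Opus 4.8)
The plan is to reduce the forward-invariance claim to a pointwise differential inequality on each inter-sample interval and then close it with a comparison (Gr\"onwall) argument. The starting observation is that along a trajectory driven by the constant input $u_k$ on $[t_k,t_k+\Delta_k)$ one has $\dot h(x(t)) = L_f h(x(t)) + L_g h(x(t)) u_k$, so that
\[
\xi(x(t),u_k) = \dot h(x(t)) + \gamma h(x(t)).
\]
Hence, if I can show $\xi(x(t),u_k)\geq 0$ for every $t$ in the interval, the comparison lemma gives $h(x(t)) \geq h(x_k)\,e^{-\gamma(t-t_k)} \geq 0$ whenever $h(x_k)\geq 0$. The entire argument then becomes: establish $\xi(x(t),u_k)\geq 0$ on each inter-sample interval, and propagate nonnegativity of $h$ across sampling instants by induction starting from $h(x(0))\geq 0$.

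Second, I would split the running value using \eqref{Deltaxi} as $\xi(x(t),u_k) = \xi(x_k,u_k) + \Delta\xi(x(t),u_k,x_k)$, isolating the sampled value $\xi(x_k,u_k)$ (directly controlled at $t_k$) from the inter-sample drift $\Delta\xi$. The point $z(t) = (x(t)^\top,u_k^\top)^\top$ lies in $\mathcal{Z}_k$, since $x(t)$ belongs to the reachable tube $\mathcal{R}([t_k,t_k+\Delta_k],x_k)$ and $u_k\in U$. Therefore the Taylor-model inclusion \eqref{eqtaylor} applies at $z(t)$ and yields the lower bound
\[
\Delta\xi(x(t),u_k,x_k) \;\geq\; \underline I_k + P^n_k(z(t)-z_k^*) \;\geq\; \underline I_k + \phi^*_k \;=\; \phi(x_k,\Delta_k),
\]
where the first inequality is the left endpoint of $I_k$ and the second uses the definition of $\phi^*_k$ as the minimum of $P^n_k$ over $\mathcal{Z}_k$.

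Combining the two decompositions with the membership $u_k\in \Kzcbf^s(x_k,\Delta_k)$, i.e. $\xi(x_k,u_k)+\phi(x_k,\Delta_k)\geq 0$, gives $\xi(x(t),u_k) \geq \xi(x_k,u_k)+\phi(x_k,\Delta_k)\geq 0$ for all $t\in[t_k,t_k+\Delta_k]$. The comparison lemma then delivers $h(x(t))\geq 0$ on the interval, and in particular $h(x_{k+1})\geq 0$; iterating over $k$ closes the induction and establishes $h(x(t))\geq 0$ for all $t\geq 0$.

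The step I expect to be the main obstacle is the uniformity argument: the margin $\phi(x_k,\Delta_k)$ is taken as a minimum over the entire reachable tube $\mathcal{Z}_k$ rather than over the single realized trajectory, so I must justify that the actual constant-input trajectory never leaves $\mathcal{R}([t_k,t_k+\Delta_k],x_k)$ and that the remainder bound $\underline I_k$ holds uniformly over that tube. This is exactly what allows the pointwise inequality $\xi(x(t),u_k)\geq 0$ to hold at \emph{every} inter-sample $t$, not merely at the sample points, which is the gap the margin $\phi$ is designed to cover. A minor technical care is needed to apply the comparison-lemma bound on a closed interval so that the terminal value $h(x_{k+1})$ feeds consistently into the next induction step.
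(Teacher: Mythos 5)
Your proposal is correct and follows essentially the same route as the paper: the same decomposition $\xi(x(t),u_k)=\xi(x_k,u_k)+\Delta\xi(x(t),u_k,x_k)$, the same use of the Taylor-model inclusion over $\mathcal{Z}_k$ to lower-bound the inter-sample drift by $\phi(x_k,\Delta_k)$, and the same induction over sampling intervals. The only difference is cosmetic: where the paper closes the argument by citing the standard zeroing-CBF invariance result (Corollary 7 of the cited reference), you make the final step explicit via the comparison lemma $h(x(t))\geq h(x_k)e^{-\gamma(t-t_k)}$, which is, if anything, more self-contained.
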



\begin{proof} By  the definition of $\phi(x_k,\Delta_k)$ and the inclusion relation \eqref{eqtaylor},  $\Delta \xi(x,u,x_k)\geq \phi(x_k,\Delta_k)$ holds for any $z\in \mathcal{Z}_k$. 
For any $t\in [t_k, t_{k}+\Delta_k)$, since $\Delta \xi(x,u_k,x_k)=\xi(x,u_k) - \xi(x_k,u_k)$, it follows that 
\begin{align*}
&L_{f} h(x)+L_{g} h(x) u_k+\gamma h(x)\\
= &L_{f} h(x_k)+L_{g} h(x_k) u_k+\gamma h(x_k)+\Delta \xi(x,u_k,x_k)\\
\geq &L_{f} h(x_k)+L_{g} h(x_k) u_k+\gamma h(x_k)+\phi(x_k,\Delta_k) \geq 0
\end{align*}
where the last inequality is from the definition of $\Kzcbf^s(x_k,\Delta_k)$ shown in \eqref{inputsetsample} and the fact that $u_k\in \Kzcbf^s(x_k,\Delta_k)$. 
Therefore, by induction, for any $t\geq 0$, $L_{f} h(x)+L_{g} h(x) u+\gamma h(x)\geq 0$ holds, which implies that $h$ is a CBF for $\mathcal{C}$. Since $u(t)$ is piecewise cosntant and therefore locally Lipschitz, the conclusion holds immediately by Corollary 7 of \cite{Xu2015ADHS}. This completes the proof.
\end{proof}

Note that $z^*_k$ can be any element in the set $\mathcal{Z}_k$. In this paper, we will choose  $z_k^*=({x_k^*}^{\top},{u_k^*}^{\top})^{\top}=({x_k}^{\top},{u_c}^{\top})^{\top}$ where $u_c$ is the center of the input admissible set $U$. 

The sampled-data safe controller is obtained by solving the following (SDCBF-QP) only at discrete sampling times:
\begin{align}
u^*(x_k) &=    \underset{u\in U}{\operatorname{argmin}}   \quad \|u-u_{nom}\|_2 \label{eqn:SDCBFQP}\tag{SDCBF-QP}\\
\mathrm{s.t.}  &\; L_f h(x_k) + L_g h(x_k) u + \gamma h(x_k) +\phi(x_k,\Delta_k) \geq 0,\nonumber
\end{align}
where $k=0,1,...$ and $u_{nom}$ is any given nominal  controller.

Next, we consider how to compute the term $\phi(x_k,\Delta_k)$ in the SDCBF condition shown in \eqref{SDCBF} efficiently, whose value is needed to construct $\Kzcbf^s(x_k,\Delta_k)$ at each sampling time $t_k$. To enable real-time implementation of \eqref{eqn:SDCBFQP}, the value of $\phi(x_k,\Delta_k)$ needs to be obtained within $\Delta_k$ time.

For nonlinear systems the exact reachable set $\mathcal{R}([t_k,t_k+\Delta_k],x_k)$ is generally very challenging to compute, so we will use an over-approximation of $\mathcal{R}([t_k,t_k+\Delta_k],x_k)$, denoted as $\hat{ \mathcal{R}}([t_k,t_k+\Delta_k],x_k)$, to compute the Taylor model \eqref{eqtaylor} and the minimization \eqref{eqphi}. By replacing $\mathcal{R}([t_k,t_k+\Delta_k],x_k)$ with $\hat{ \mathcal{R}}([t_k,t_k+\Delta_k],x_k)$ in Proposition \ref{prop1}, the value of $\phi(x_k,\Delta_k)$ will be smaller which will render the admissible input set $\Kzcbf^s(x_k,\Delta_k)$ smaller; however, as long as $\Kzcbf^s(x_k,\Delta_k)$ is non-empty for every $k$, any input $u(t)=u_k,t\in[t_k,t_k+\Delta_k)$ such that  $u_k\in \Kzcbf^s(x_k,\Delta_k)$ 
will still guarantee the forward invariance of the set $\mathcal{C}$.  

The computation of $\phi(x_k,\Delta_k)$ in Proposition \ref{prop1} involves two tasks: 1) find $\hat{ \mathcal{R}}([t_k,t_k+\Delta_k],x_k)$ and 2) compute $\min_{z\in \mathcal{Z}_k}P^n_{k}(z-z_k^*)$. In the following, we will discuss how these two tasks can be accomplished efficiently.

1) \emph{Find $\hat{ \mathcal{R}}([t_k,t_k+\Delta_k],x_k)$.} 
To find $\hat{ \mathcal{R}}([t_k,t_k+\Delta_k],x_k)$, we will utilize the method in \cite{althoff2008reachability}, which is based on the linearization of a nonlinear system with interval remainder.
Consider a control system given in \eqref{eqnsys} and recall that $z=(x^\top,u^\top)^\top$. Given a state-input pair $z^*_k=({x^*_k}^\top,{u^*_k}^\top)^\top = $ $({x_k}^{\top},{u_c}^{\top})^{\top}$, the infinite Taylor series of the $i$-th state $x_i$ can be overapproximated by its first order Taylor polynomial and its Lagrange remainder as follows:
\begin{align*}
\dot{x}_{i}\in & F_{i}\left(x^{*}_k,u^*_k\right)+\left.\frac{\partial F_{i}(z)}{\partial z}\right|_{z=z^{*}_k}\left(z-z^{*}_k\right) + L_i([0,1])
\end{align*}
where 
\begin{align*}
L_i([0,1])&=\{\frac{1}{2}\left(z-z^{*}_k\right)^{\top} \frac{\partial^{2} F_{i}(z)}{\partial z^{2}}\left.\right|_{z=\zeta}\left(z-z^{*}_k\right)\mid\nonumber\\
&\quad \zeta=z^*_k+\theta(z-z^*_k),\theta\in [0,1]\}\in \IR
\end{align*}
and $z$ is restricted to a convex set. Therefore,  system \eqref{eqnsys} can be written into the following differential inclusion form:
\begin{align}
\dot x&\in F\left(z^{*}_k\right)+\left.\frac{\partial F(z)}{\partial z}\right|_{z=z^{*}_k}\left(z-z^{*}_k\right) + L([0,1])\nonumber\\
&=A(x-x^*_k)\!+\!B(u-u^*_k)\!+\!F\left(x^{*}_k,u^*_k\right)\! + \!L([0,1])\label{eqinclusion}
\end{align}
where 
\begin{align*}
A&=\left.\left(\frac{\partial f(x)}{\partial x}+\frac{\partial g(x)}{\partial x}u\right)\right|_{x=x^{*}_k,u=u^*_k}\in\R^{n\times n},\\
B&=g(x^*_k)\in\R^{n\times m},\\
L([0,1])&=[L_1([0,1]),\dots,L_n([0,1])]^\top\in \IR^n.
\end{align*}
The over-approximated reachable set $\hat{ \mathcal{R}}([t_k,t_k+\Delta_k],x_k)$ can be obtained by 
$$
\hat{ \mathcal{R}}([t_k,t_k+\Delta_k],x_k)=R_{lin}(x_k)\oplus R_{err}(x_k)
$$ 
where  $R_{lin}(x_k)$ is the over-approximated reachable set of the linearized system shown in \eqref{eqinclusion} with $L=0$, $R_{err}(x_k)$ is the over-approximated reachable set of the linearized system resulting from the remainder term $L$, and $\oplus$ denotes the Minkowski sum. As in \cite{althoff2008reachability}, we choose zonotopes or intervals as the representation of reachable sets because the computational efficiency of these representations. We utilize the same algorithms presented in \cite{althoff2008reachability} to compute $R_{lin}(x_k)$ and $R_{err}(x_k)$ for state $x_k$ at each  sampling time $t_k$. 




2) \emph{Compute $\min_{z\in \mathcal{Z}_k}P^n_{k}(z-z_k^*)$.} By the construction of $\hat{ \mathcal{R}}([t_k,t_k+\Delta_k],x_k)$ above, the set $\mathcal{Z}_k$ is represented as a zonotope or intervals, which can be readily expressed as a polytope, a more general set representation than zonotope/interval. Specifically, there exists a matrix $H_k$ and a column vector $\bf 1$ whose elements are all 1 with appropriate dimensions such that $\mathcal{Z}_k=\{(x,u)\mid H_k\begin{pmatrix}x\\u\end{pmatrix}\leq {\bf 1}\}$. 
Since $P^n_{k}(z-z_k^*)$ is a polynomial with variables $x$ and $u$,  the optimization problem $\min_{z\in \mathcal{Z}_k}P^n_{k}(z-z_k^*)$ becomes a polynomial optimization problem:
\begin{align*}
 \mbox{(POP)}\quad \phi^*_k = \min_{x,u} \quad & P^n_{k}\left(\begin{pmatrix}x\\u\end{pmatrix}-\begin{pmatrix}x_k^*\\u_k^*\end{pmatrix}\right)\\
  s.t.\quad  & H_k\begin{pmatrix}x\\u\end{pmatrix}\leq \bf 1
\end{align*}
A polynomial optimization problem is generally non-convex and  known to be NP-hard \cite{anjos2011handbook}. A polynomial optimization problem can be solved using non-convex global solvers, such as BMIBNB in YALMIP that is based on the branch \& bound algorithm. It also can be solved by relaxation methods either based on linear programming or semidefinite programming \cite{lasserre2002semidefinite}. In this paper we choose to solve (POP) by using Lasserre's linear matrix inequality (LMI) relaxations to obtain a lower bound for $\phi^*_k$, the global minimum of  (POP) \cite{lasserre2001global}. The relaxed LMIs in Lasserre's hierarchy are convex and can be solved using the interior-point algorithm in polynomial time, and the solutions of the LMIs provide a monotonically nondecreasing sequence of lower bounds for $\phi^*_k$.  Although the global optimal solution of (POP) can be obtained by increasing the relaxation order, the computational burden increases significantly with larger relaxation order. On the other hand, if $\phi(x_k,\Delta_k)$ is chosen to be $\phi(x_k,\Delta_k)=\underline I_k+\underline \phi_k$ where $\underline\phi_k$ is \emph{any} lower bound of $\phi_k^*$, then from the proof of Proposition \ref{prop1} it is easy to see that $u_k\in \Kzcbf^s(x_k,\Delta_k)$ will still render the set $\mathcal{C}$ forward invariant.

The following corollary formalizes  the discussion above.
\begin{corollary}\label{coro}
Suppose that $z_k^*\in \mathcal{Z}_k$, $(P^n_{k},I_k)$ is the $n$-th Taylor model of $\Delta \xi(x,u,x_k)$ around $z_k^*$, $\phi(x_k,\Delta_k)$ is chosen to be $\phi(x_k,\Delta_k)=\underline I_k+\underline \phi_k$ where $\underline I_k$ is the lower bound of $I_k$, $\underline \phi_k$ is the optimal value of any Lasserre's LMI for (POP), and  $\Kzcbf^s(x_k,\Delta_k)\neq \emptyset$ for every $k$. Then any input $u(t)=u_k,t\in[t_k,t_k+\Delta_k)$ such that  $u_k\in \Kzcbf^s(x_k,\Delta_k)$ 
will render the set $\mathcal{C}$ forward invariant.
\end{corollary}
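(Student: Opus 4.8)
The plan is to show that the corollary follows from Proposition \ref{prop1} by a single monotonicity observation, so that the bulk of the argument reuses the proof of Proposition \ref{prop1} essentially verbatim. The only genuinely new ingredient is the defining property of Lasserre's hierarchy: the optimal value $\underline\phi_k$ of any LMI relaxation of (POP) is a \emph{lower bound} for the true global minimum $\phi^*_k=\min_{z\in\mathcal{Z}_k}P^n_k(z-z^*_k)$, i.e. $\underline\phi_k\leq\phi^*_k$. This is exactly the relaxation property already recalled from \cite{lasserre2001global} in the discussion preceding the statement.

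First I would record the resulting inequality on the margin term. Since $\underline\phi_k\leq\phi^*_k$, the corollary's choice satisfies $\phi(x_k,\Delta_k)=\underline I_k+\underline\phi_k\leq\underline I_k+\phi^*_k$, where the right-hand side is precisely the margin used in Proposition \ref{prop1}. Next I would reproduce the key estimate from that proof: the Taylor inclusion \eqref{eqtaylor}, together with the fact that $I_k$ is a constant interval independent of $z$, gives for every $z\in\mathcal{Z}_k$ the chain $\Delta\xi(x,u,x_k)\geq \min_{z\in\mathcal{Z}_k}P^n_k(z-z^*_k)+\underline I_k=\phi^*_k+\underline I_k\geq\underline\phi_k+\underline I_k=\phi(x_k,\Delta_k)$. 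Thus the crucial inequality $\Delta\xi(x,u,x_k)\geq\phi(x_k,\Delta_k)$ that drives the proof of Proposition \ref{prop1} continues to hold, now with a smaller (more conservative) margin.

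With that inequality in hand, the remainder of the argument is identical to Proposition \ref{prop1}. For any $t\in[t_k,t_k+\Delta_k)$ the constant input $u_k$ gives $z(t)=(x(t)^\top,u_k^\top)^\top\in\mathcal{Z}_k$, so $L_fh(x(t))+L_gh(x(t))u_k+\gamma h(x(t))=L_fh(x_k)+L_gh(x_k)u_k+\gamma h(x_k)+\Delta\xi(x(t),u_k,x_k)\geq 0$, where nonnegativity combines $u_k\in\Kzcbf^s(x_k,\Delta_k)$ with the inequality from the previous paragraph. Since $\Kzcbf^s(x_k,\Delta_k)\neq\emptyset$ is assumed for every $k$, such a $u_k$ always exists; as $u(t)$ is piecewise constant and hence locally Lipschitz, Corollary 7 of \cite{Xu2015ADHS} yields the forward invariance of $\mathcal{C}$.

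I do not anticipate a real obstacle here, since replacing $\phi^*_k$ by a lower bound $\underline\phi_k$ only \emph{tightens} the constraint on $u_k$ without weakening any of the inequalities used to certify safety. The one point deserving care is the nonemptiness of $\Kzcbf^s(x_k,\Delta_k)$: a smaller margin makes the SDCBF condition harder to satisfy, so feasibility is not automatic. This is exactly why it is imposed as a hypothesis rather than derived, and in practice it must be verified at each sampling instant, for instance by increasing the relaxation order until the admissible set is nonempty.
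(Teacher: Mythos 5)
Your proposal is correct and matches the paper's intended argument exactly: the paper itself disposes of the corollary by the same single observation that the Lasserre relaxation value $\underline\phi_k$ is a lower bound for $\phi^*_k$, so the inequality $\Delta\xi(x,u,x_k)\geq \underline I_k+\underline\phi_k=\phi(x_k,\Delta_k)$ still holds and the proof of Proposition~\ref{prop1} goes through verbatim. Your added remark on why nonemptiness of $\Kzcbf^s(x_k,\Delta_k)$ must be assumed rather than derived is accurate and consistent with the paper's surrounding discussion.
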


We use SparsePOP  to exploit the sparse structure of polynomials when applying Lasserre's hierarchy of LMI relaxations to (POP) \cite{waki2008algorithm}, and use Mosek to solve the relaxed semidefinite programmings \cite{mosek}. The computational efficiency of finding $\hat{ \mathcal{R}}([t_k,t_k+\Delta_k],x_k)$ and computing $\min_{z\in \mathcal{Z}_k}P^n_{k}(z-z_k^*)$ will be demonstrated in simulations and experiments in Section \ref{simulation}.


\begin{remark}
In \cite{breeden2021control}, three types of modified CBF conditions for sampled-data systems were proposed. The computation of the CBF conditions there involves non-convex optimization problems that can be solved by nonlinear programming solvers such as FMINCON or IPOPT \cite{wachter2006implementation}. However, these solvers are sensitive to the initial conditions, have no guarantee on termination time in general and can only find local optimum values, which make them unsuitable for safety-critical applications. In addition, imperfect state estimation and actuation were not considered in \cite{breeden2021control}. In \cite{singletary2020control}, a robust backup controller-based CBF controller under state uncertainty is proposed requiring the sampled-data system to be incremental stable. Besides, the CBF condition in \cite{singletary2020control} involves the nonlinear robust optimization problems which might not be tractable for complex nonlinear dynamics.


Compared with existing results, the proposed framework is applicable to any nonlinear control affine dynamics. In particular, computing $\phi(x_k,\Delta_k)$ in \eqref{inputsetsample} is based on convex programs and has several advantages: (i) the related LMIs are convex programs that can be solved efficiently with real-time computation guarantees; (ii) by choosing the order of the Taylor polynomial and the relaxation order of Lasserre's LMI for (POP), we can make a trade-off between the computation's effiency and optimality; (iii) any lower bound of $\phi(x_k,\Delta_k)$ can be used to guarantee the safe set forward invariant as stated in Corollary \ref{coro}.
\end{remark}





\subsection{Extension to High Relative Degree Case}

Results in the preceding subsection can be easily generalized to CBF with a relative degree $r\geq 2$. 
For the sampled state $x_k$ and the sampling interval $\Delta_k$, define the sampled-data admissible input set as follows:
\begin{align}
&\Kzcbf^s(x_k,\Delta_k) =  \{  u \in U \mid L_f^rh(x_k)+L_g L_f^{r-1}h(x_k)u\nonumber\\
&\quad +{\bf a}^{\top}\eta(x_k)+ \phi(x_k,\Delta_k)\geq 0\}. \label{inputsetsample2}
\end{align}

\begin{proposition}\label{prop:highdegree}
Consider control system \eqref{eqnsys} and a set $\mathcal{C} \subset \R^n$ defined by \eqref{setc} for a $C^r$ function $h$ that is a CBF with a relative degree $r$ such that \eqref{ineq:ZCBF2} holds. Suppose that $z_k^*=({x_k^*}^{\top},{u_k^*}^{\top})^{\top}$ is a given state-input pair in the set $\mathcal{Z}_k$ where $\mathcal{Z}_k$ is given in \eqref{eqz}  and $(P^n_{k},I_k)$ is the $n$-th Taylor model of $\Delta \xi(x,u,x_k)$ around $z_k^*$ where $\Delta \xi(x,u,x_k)$ is 
defined as in \eqref{Deltaxi} with 
\begin{equation}\label{xidef2}
    \xi(\cdot,u)\triangleq L_g L_f^{r-1}h(\cdot)u +L_f^rh(\cdot)+{\bf a}^{\top}\eta(\cdot). 
\end{equation}
Suppose that $\phi(x_k,\Delta_k)$ is chosen to be $\phi(x_k,\Delta_k)=\underline I_k+\phi^*_k$ 
where $\underline I_k$ is the lower bound of $I_k$, $\phi^*_k=\min_{z\in \mathcal{Z}_k}P^n_{k}(z-z_k^*)$, and the resulting set $\Kzcbf^s(x_k,\Delta_k)$ is non-empty. If $s_k(x(0))\geq 0$ for $k=0,1,...,r-1$, where $s_k$ are given in \eqref{liftedh}, then any input $u(t)=u_k,t\in[t_k,t_k+\Delta_k)$ such that  $u_k\in \Kzcbf^s(x_k,\Delta_k)$ 
will render $h(x(t))\geq 0$ for all $t\geq 0$. 
\end{proposition}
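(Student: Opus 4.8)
The plan is to reduce the high-relative-degree sampled-data problem to the continuous-time exponential CBF result of \cite{nguyen2016exponential}, exactly as Proposition \ref{prop1} reduces to Corollary 7 of \cite{Xu2015ADHS}. The sampled-data bookkeeping (Taylor model, margin term, induction over sampling intervals) is identical to the relative-degree-1 case with $\xi$ now given by \eqref{xidef2}; the only genuinely new ingredient is the final invariance argument, since positivity of a single barrier inequality no longer suffices to conclude safety.

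First I would establish a pointwise lower bound on the increment. By the choice $\phi(x_k,\Delta_k)=\underline I_k+\phi^*_k$ together with the Taylor-model inclusion $\Delta\xi(x,u,x_k)\in P^n_k(z-z_k^*)+I_k$ valid on $\mathcal{Z}_k$, I obtain $\Delta\xi(x,u,x_k)\ge \phi(x_k,\Delta_k)$ for every $z\in\mathcal{Z}_k$, where $\xi$ is now the higher-order expression. Fixing a sampling interval and taking $u_k\in\Kzcbf^s(x_k,\Delta_k)$, for any $t\in[t_k,t_k+\Delta_k)$ the true state $x(t)$ lies in the reachable tube, so $(x(t)^\top,u_k^\top)^\top\in\mathcal{Z}_k$ and the decomposition \eqref{Deltaxi} gives
\begin{align*}
L_gL_f^{r-1}h(x(t))u_k+L_f^rh(x(t))+\mathbf{a}^\top\eta(x(t)) &= \xi(x_k,u_k)+\Delta\xi(x(t),u_k,x_k)\\
&\ge \xi(x_k,u_k)+\phi(x_k,\Delta_k)\ge 0,
\end{align*}
where the last inequality uses the definition of $\Kzcbf^s(x_k,\Delta_k)$ in \eqref{inputsetsample2}. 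Inducting over $k$ shows that the continuous-time condition $L_gL_f^{r-1}h(x(t))u(t)+L_f^rh(x(t))+\mathbf{a}^\top\eta(x(t))\ge 0$ holds for all $t\ge 0$ under the piecewise-constant input \eqref{eqinput}.

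Finally I would invoke the exponential CBF cascade. The established inequality is exactly the condition in \eqref{ineq:ZCBF2}, and by the construction \eqref{liftedh} with roots $-\la_1,\dots,-\la_r$ it is equivalent to $\dot s_{r-1}+\la_r s_{r-1}\ge 0$ along the closed-loop trajectory, since applying the operator $p_0^r(\tfrac{\diff}{\diff t})$ to $h$ along \eqref{eqnsys} reproduces precisely $\xi$. Combined with the hypothesis $s_k(x(0))\ge 0$ for $k=0,\dots,r-1$, the comparison lemma applied successively down the chain $s_{r-1},s_{r-2},\dots,s_0=h$ yields $h(x(t))\ge 0$ for all $t\ge 0$; this is the conclusion of \cite{nguyen2016exponential}, whose Lipschitz hypothesis is met because $u(t)$ is piecewise constant.

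The main obstacle is this last step. Unlike the relative-degree-1 case, where $\xi\ge 0$ reads $\dot h+\gamma h\ge 0$ and a single comparison estimate immediately gives $h(x(t))\ge 0$, the high-degree case requires carrying along the auxiliary functions $s_0,\dots,s_{r-1}$ and relying on their prescribed initial positivity. The sampled-data machinery only guarantees the continuous-time inequality; transferring it into forward invariance of $\mathcal{C}$ depends entirely on the exponential-CBF cascade and the initial-condition assumption, which is why these hypotheses appear in the statement.
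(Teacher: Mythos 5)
Your proof is correct and follows essentially the same route as the paper: repeat the Proposition \ref{prop1} argument with $\xi$ replaced by \eqref{xidef2} to get the continuous-time condition $L_gL_f^{r-1}h(x)u+L_f^rh(x)+\mathbf{a}^{\top}\eta(x)\geq 0$ for all $t\geq 0$, then invoke the exponential-CBF result of \cite{nguyen2016exponential} together with $s_k(x(0))\geq 0$. You actually supply more detail than the paper does on the final cascade step (the paper simply cites \cite{nguyen2016exponential}), and your unpacking of it via the comparison lemma down the chain $s_{r-1},\dots,s_0$ is accurate.
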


\begin{proof}
Using the same proof procedure  as Proposition \ref{prop1}, one can show that any input $u(t)=u_k,t\in[t_k,t_k+\Delta_k)$ such that  $u_k\in \Kzcbf^s(x_k,\Delta_k)$ will render $L_g L_f^{r-1}h(x)u +L_f^rh(x)+{\bf a}^{\top}\eta(x)\geq 0$. Since $u(t)=u_k$ is piecewise constant, the roots of $p_0^r(\lambda)$ are all negative,  and $s_k(x(0))\geq 0$ for $k=0,1,...,r$,  the condition follows by the results of \cite{nguyen2016exponential}. 
\end{proof}

\section{Safety under Measurement \& Actuation Uncertainties}\label{uncertain}





In practice, the exact state information of a control system is unknown.  For sampled-data systems, an estimate of the system state is available at sampling instances, which can be obtained from an observer such as Luenberger or interval observer, or from a Kalman filter. The following assumption provides a measure of the estimation accuracy. 

\begin{assumption}\label{assumx}
At any time instance $t_k,k \in \mathbb{Z}_{\geq 0}$, the state of the system $x_k$ and the estimated state $\hat x_k$ satisfy
\begin{align*}
x_k \in \{\hat x_k\} \oplus B_{\epsilon_x}(0),
\end{align*}
where $B_{\epsilon_x}(0)$ is the 2-norm ball at the origin with a radius of $\epsilon_x >0$, i.e., $B_{\epsilon_x}(0)=\{x\in \mathbb{R}^n\ |\ \|x\|_2\leq \epsilon_x\}$.
\end{assumption}

 From Assumption \ref{assumx}, we have that $x_k \in B_{\epsilon_x}(\hat{x}_k)\triangleq\{x\in \mathbb{R}^n\ |\ \|x-\hat x_k\|_2\leq \epsilon_x\}$.
 Since we only have the estimated state of the system, we will guarantee the forward invariance of the set $\mathcal{C}$ utilizing the enlarged reachable tube $\mathcal{R}([t_k,t_k+\Delta_k],B_{\epsilon_x}(\hat{x}_k)) = \{x(t,x_0)\ |\ x_0 \in B_{\epsilon_x}(\hat{x}_k)),t\in[t_k,t_k+\Delta_k],u\in U\}$ and the corresponding set $\hat{\mathcal{Z}}_k$ defined as
\begin{equation}\label{eqzhat}
    \hat{\mathcal{Z}}_k \triangleq \mathcal{R}([t_k,t_k+\Delta_k],B_{\epsilon_x}(\hat{x}_k)))\times U.
\end{equation}

Besides the uncertainty from the state estimation, the real system might also have imperfect actuator which causes a deviation between the desired input and the real input. To account for the uncertain actuation, we need to bound this deviation and thus guarantee the system safety for the worst-case scenario.


\begin{assumption}\label{assumu}
At any time instance $t_k,k \in \mathbb{Z}_{\geq 0}$, the desired input $u^d_k$ and the real input $u^r_k$ implemented by the system satisfy:
\begin{align*}
u^r_k \in \{u^d_k\} \oplus B_{\epsilon_u}(0)
\end{align*}
where $B_{\epsilon_u}(0)=\{u\in \mathbb{R}^m\ |\ \|u\|_2\leq \epsilon_u\}$. 
\end{assumption}


Suppose that $\hat z_k^* = ({\hat x_k}^{\top},{u_c}^{\top})^{\top} \in \hat{\mathcal{Z}}_k$ where $u_c$ is the center of the set $U$ and $\hat{\mathcal{Z}}_k$ is given in \eqref{eqzhat}. $(\hat P^n_{k},\hat I_k)$ is the $n$-th Taylor model of $\Delta \xi(x,u,\hat x_k)$ around $\hat z_k^*$. Let $\phi(\hat x_k,\Delta_k)=\hat{\underline I}_k+\hat{\phi}^*_k$ where $\hat{\underline I}_k$ is the lower bound of $\hat I_k$, $\hat{\phi}^*_k=\min_{z\in \hat{\mathcal{Z}}_k}\hat P^n_{k}(z-\hat z_k^*)$.
Then, we define the new admissible input set as 
\begin{align}
    & \hat K_{\mathrm{zcbf}}^s (\hat x_k,\Delta_k)= \{u\in U\ominus B_{\epsilon_u}(0) \ |\ L_f^rh(\hat x_k) \nonumber \\ 
    & +L_g L_f^{r-1}h(\hat x_k)u +{\bf a}^{\top}\eta(\hat x_k) + \phi(\hat x_k,\Delta_k) \geq 0\}, \label{inputsetsample3}
\end{align}
where $\ominus$ is the Pontryagin difference.

The following result provides a solution to Problem \ref{problem2}.
\begin{proposition}\label{prop:uncertain}
Consider control system \eqref{eqnsys} and a set $\mathcal{C} \subset \R^n$ defined by \eqref{setc} for a $C^r$ function $h$ that is a CBF with a relative degree $r$ such that \eqref{ineq:ZCBF2} holds. 
Suppose $\hat K_{\mathrm{zcbf}}^s(\hat x_k,\Delta_k)$ is non-empty. If $\min_{x \in B_{\epsilon_x}(\hat x_0)} s_k(x)\geq 0$ for $k=0,1,...,r-1$, where $s_k$ are given in \eqref{liftedh}, then any input $u(t)=u_k^d,t\in[t_k,t_k+\Delta_k)$ such that  $u_k^d\in \hat K_{\mathrm{zcbf}}^s(\hat x_k,\Delta_k)$ will render $h(x(t))\geq 0$ for all $t\geq 0$. 

\end{proposition}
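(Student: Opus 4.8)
The plan is to adapt the argument of Proposition~\ref{prop:highdegree} to the uncertain setting, the only new ingredients being that (i) the true initial state $x_k$ is replaced by the measured $\hat x_k$ together with the ball $B_{\epsilon_x}(\hat x_k)$, and (ii) the input that actually drives the dynamics is the perturbed $u_k^r\in\{u_k^d\}\oplus B_{\epsilon_u}(0)$ rather than the commanded $u_k^d$. The target is to show that $\xi(x(t),u_k^r)\geq 0$ for every $t\in[t_k,t_k+\Delta_k)$ and every admissible actuation error, with $\xi$ as in \eqref{xidef2}; once this holds on every sampling interval, the lifted functions $s_0,\dots,s_{r-1}$ stay nonnegative and the exponential-CBF result of \cite{nguyen2016exponential} yields $h(x(t))\geq 0$ for all $t\geq 0$.

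First I would dispose of the measurement uncertainty exactly as in Proposition~\ref{prop1}. By Assumption~\ref{assumx} the unknown true state satisfies $x_k\in B_{\epsilon_x}(\hat x_k)$, so the solution $x(t)$ generated from $x_k$ under any input in $U$ lies in the enlarged reachable tube $\mathcal{R}([t_k,t_k+\Delta_k],B_{\epsilon_x}(\hat x_k))$ for all $t\in[t_k,t_k+\Delta_k]$. Consequently $(x(t)^\top,{u_k^r}^\top)^\top\in\hat{\mathcal{Z}}_k$ provided $u_k^r\in U$, and by the defining inclusion of the Taylor model $(\hat P_k^n,\hat I_k)$ together with the choice $\phi(\hat x_k,\Delta_k)=\hat{\underline I}_k+\hat{\phi}^*_k$ I obtain the uniform lower bound $\Delta\xi(x(t),u_k^r,\hat x_k)\geq\phi(\hat x_k,\Delta_k)$, i.e. $\xi(x(t),u_k^r)\geq\xi(\hat x_k,u_k^r)+\phi(\hat x_k,\Delta_k)$.

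Next I would handle the actuation uncertainty. The restriction $u_k^d\in U\ominus B_{\epsilon_u}(0)$ built into $\hat K_{\mathrm{zcbf}}^s(\hat x_k,\Delta_k)$ guarantees, by the definition of the Pontryagin difference and Assumption~\ref{assumu}, that the realized input $u_k^r$ stays in $U$, which is what legitimized the inclusion $(x(t),u_k^r)\in\hat{\mathcal{Z}}_k$ used above. It remains to pass from the admissible-set guarantee $\xi(\hat x_k,u_k^d)+\phi(\hat x_k,\Delta_k)\geq 0$, stated for the command $u_k^d$, to the same inequality for the realized $u_k^r$. Using that $\xi(\hat x_k,\cdot)$ is affine in the input, the discrepancy is exactly $L_gL_f^{r-1}h(\hat x_k)(u_k^r-u_k^d)$, bounded in magnitude by $\|L_gL_f^{r-1}h(\hat x_k)\|_2\,\epsilon_u$. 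Controlling this worst-case actuation term --- showing that it is absorbed by the margin $\phi(\hat x_k,\Delta_k)$ computed over the full input set $U$ inside $\hat{\mathcal{Z}}_k$, or equivalently that the admissible set is defined so as to leave precisely this much slack --- is the step I expect to be the main obstacle, since the measurement part only required enlarging the reachable tube whereas the actuation part genuinely couples the commanded and realized inputs through the input-gain $L_gL_f^{r-1}h$.

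Finally, once $\xi(x(t),u_k^r)\geq 0$ is established on each interval, I would close the argument with the initialization hypothesis. Because the true $x_0$ is unknown but lies in $B_{\epsilon_x}(\hat x_0)$, the robust requirement $\min_{x\in B_{\epsilon_x}(\hat x_0)}s_k(x)\geq 0$ for $k=0,\dots,r-1$ guarantees $s_k(x_0)\geq 0$ for the actual initial state. Since $u(t)$ is piecewise constant, hence locally Lipschitz, and the roots of $p_0^r(\lambda)$ are negative reals, the nonnegativity of $\xi(x(t),u_k^r)$ together with the nonnegative initialization of $s_0,\dots,s_{r-1}$ lets me invoke the exponential-CBF forward-invariance theorem of \cite{nguyen2016exponential} to conclude $h(x(t))\geq 0$ for all $t\geq 0$, completing the proof.
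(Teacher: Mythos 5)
Your proposal follows the same route as the paper's proof: enlarge the reachable tube to trajectories initialized anywhere in $B_{\epsilon_x}(\hat x_k)$ so that the Taylor-model margin $\phi(\hat x_k,\Delta_k)$ covers the measurement uncertainty, use the Pontryagin difference $U\ominus B_{\epsilon_u}(0)$ to keep the realized input inside $U$, and invoke the robust initialization $\min_{x\in B_{\epsilon_x}(\hat x_0)}s_k(x)\geq 0$ together with the exponential-CBF result of \cite{nguyen2016exponential} exactly as you describe. The measurement half and the closing step are handled correctly and match the paper.

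The step you flag as the main obstacle --- passing from $\xi(\hat x_k,u_k^d)+\phi(\hat x_k,\Delta_k)\geq 0$ to the same inequality for the realized input $u_k^r$ --- is a genuine gap, and the paper's own proof does not resolve it: it asserts that $u_k^r\in\hat K_{\mathrm{zcbf}}^s(\hat x_k,\Delta_k)\oplus B_{\epsilon_u}(0)$ ``implies'' $L_f^rh(\hat x_k)+L_gL_f^{r-1}h(\hat x_k)u_k^r+{\bf a}^\top\eta(\hat x_k)+\phi(\hat x_k,\Delta_k)\geq 0$, but the Minkowski sum of a half-space with a ball is a strictly larger half-space, so this inequality can fail by as much as $\|L_gL_f^{r-1}h(\hat x_k)\|_2\,\epsilon_u$. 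The Pontryagin difference in \eqref{inputsetsample3} only shrinks the box constraint $U$; it does not tighten the CBF half-space. Nor is the slack hidden in $\phi$: the margin lower-bounds $\Delta\xi(x,u,\hat x_k)=\xi(x,u)-\xi(\hat x_k,u)$ with the \emph{same} $u$ in both terms, so minimizing over $u\in U$ never compares the commanded $u_k^d$ with the realized $u_k^r$ at the base point $\hat x_k$. The fix that your analysis essentially dictates is to strengthen the constraint defining $\hat K_{\mathrm{zcbf}}^s$ to $L_f^rh(\hat x_k)+L_gL_f^{r-1}h(\hat x_k)u+{\bf a}^\top\eta(\hat x_k)+\phi(\hat x_k,\Delta_k)-\|L_gL_f^{r-1}h(\hat x_k)\|_2\,\epsilon_u\geq 0$, or equivalently to fold the worst-case actuation term into $\phi$; with that modification your argument closes, whereas with the set as written in \eqref{inputsetsample3} neither your proof nor the paper's establishes the claim.
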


\begin{proof}
Since the desired input $u_{k}^d\in \hat K_{\mathrm{zcbf}}^s(\hat x_k,\Delta_k)$, according to Assumption \ref{assumu}, the real input $u_{k}^r\in \hat K_{\mathrm{zcbf}}^s(\hat x_k,\Delta_k) \oplus B_{\epsilon_u}(0)$ which implies that $u^r_k \in U$ and $L_f^rh(\hat x_k) +L_g L_f^{r-1}h(\hat x_k)u^r_k +{\bf a}^{\top}\eta(\hat x_k) + \phi(\hat x_k,\Delta_k) \geq 0$. Using the definition of $\phi(\hat x_k,\Delta_k)$, one can get that $ L_g L_f^{r-1}h(x)u +L_f^rh(x)+{\bf a}^{\top}\eta(x)\geq 0$ for all $t\geq 0$. Following the same proof as in Proposition \ref{prop:highdegree}, it is easy to show the forward invariance of the set $\mathcal{C}$.
\end{proof}

The sampled-data safe controller with inaccurate state estimation and imperfect actuator is obtained by solving the following uncertain sampled-data CBF-QP (USDCBF-QP) only at discrete sampling times:
\begin{align}
u^d(\hat x_k) &=    \underset{u\in U\ominus B_{\epsilon_u}(0)}{\operatorname{argmin}}   \quad \|u-u_{nom}\|_2 \label{eqn:USDCBFQP}\tag{USDCBF-QP}\\
\mathrm{s.t.}  &\; L_f h(\hat x_k) + L_g h(\hat x_k) u + \gamma h(\hat x_k) +\phi(\hat x_k,\Delta_k) \geq 0\nonumber
\end{align}
where $k=0,1,...$ and $u_{nom}$ is any given nominal  controller.

\section{Simulation \& Experiment}\label{simulation}

In this section, we demonstrate the effectiveness of the proposed USDCBF condition using two simulation examples and one experiment example on the Crazyflie Quadcopter \cite{PX4}. The sampled-data controller is used for all examples, but different CBF conditions are used in the QPs. For simplicity, we will refer to the sampled-data controller with the naive CBF constraint shown in \eqref{eqsampled} as CBF controller and the safe sampled-data controller by solving \eqref{eqn:USDCBFQP} as USDCBF controller. 

\begin{figure}[!b] 
\centering
\includegraphics[width=0.36\textwidth]{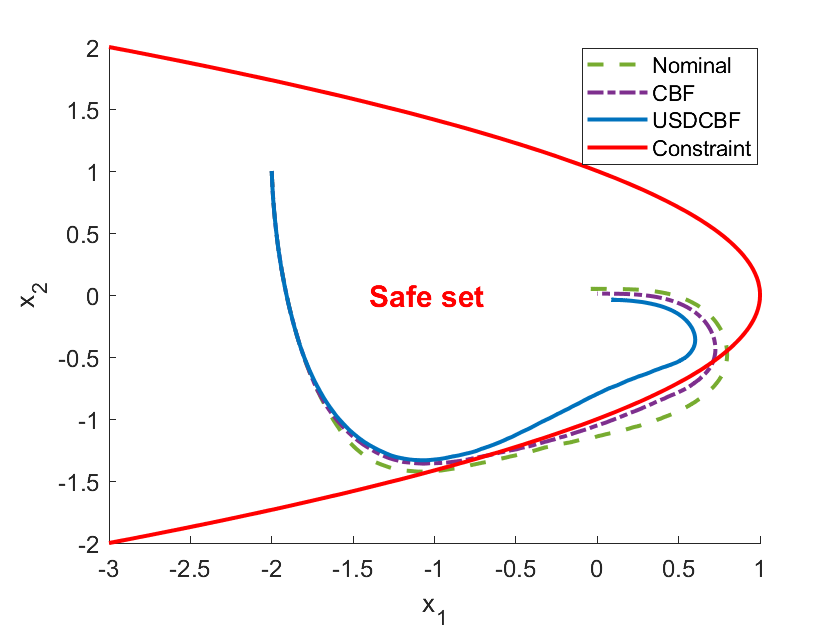}
\caption{Simulation result for Example \ref{exmp1}. System trajectories with three controllers are shown: (i) nominal controller (green), which is obtained from control Lyapunov function conditions; (ii) CBF controller (purple), which is a sampled-data CBF-QP controller with the naive CBF constraint shown in \eqref{eqsampled}; (iii) USDCBF controller (blue), which is a sampled-data controller obtained by solving (USDCBF-QP). The 0-level set of CBF $h$ as the safe constraint is shown in red. The measurement and actuation uncertainties are chosen as  $\epsilon_x = \epsilon_u = 0.1$.}
\label{exmp1_sim}
\end{figure}

\begin{example}\label{exmp1}

Consider the  following dynamics \cite{jankovic2018robust}:
\begin{align*}
    \dot{x}_{1} &=-0.6 x_{1}-x_{2} \\
    \dot{x}_{2} &=x_{1}^{3}+x_{2} u.
\end{align*}
Consider the safe set $\mathcal{C} = \{x\in \mathbb{R}^2: h(x)\geq 0\}$ where $h(x) = -x_2^2-x_1+1$, which has a relative degree 1. Assume the input is constrained in the set $U = \{u\ |-1\leq u\leq 1\}$, the periodic sampling time is 0.02 seconds and $\gamma = 3$ in \eqref{inputsetsample}. The uncertainty bounds on the estimation error and the actuation error are both 0.1, i.e., $\epsilon_x = \epsilon_u = 0.1$. The control objective is to steer the system to the origin while keeping the system trajectory inside the safe set $\mathcal{C}$. We choose the nominal controller to be a stabilizing controller based on control Lyapunov functions without considering the safety constraint and implement the sampled-data safe controller from \eqref{eqn:USDCBFQP}. The closed-loop system is simulated for 10 seconds starting from the initial state $x_0 = [-2,1]^{\top}$. The average computation time (including the computation of $\phi(x_k,\Delta_k)$ and solving \eqref{eqn:USDCBFQP}) at $t=t_0,t_1,...$ is around 0.018 seconds using MATLAB R2020b in a computer with 3.7 GHz CPU and 32 GB memory. Fig. \ref{exmp1_sim} shows system trajectories with the USDCBF controller and the CBF controller. It can be observed that in the presence of state measurement and actuation uncertainties, the CBF controller can not keep the system safe when the CBF condition is naively applied as in \eqref{eqsampled}; in contrast, the USDCBF controller respects the safety constraint for all time while steering the trajectory to the origin.

\end{example}

\begin{figure}[!b] 
\centering
\includegraphics[width=0.43\textwidth]{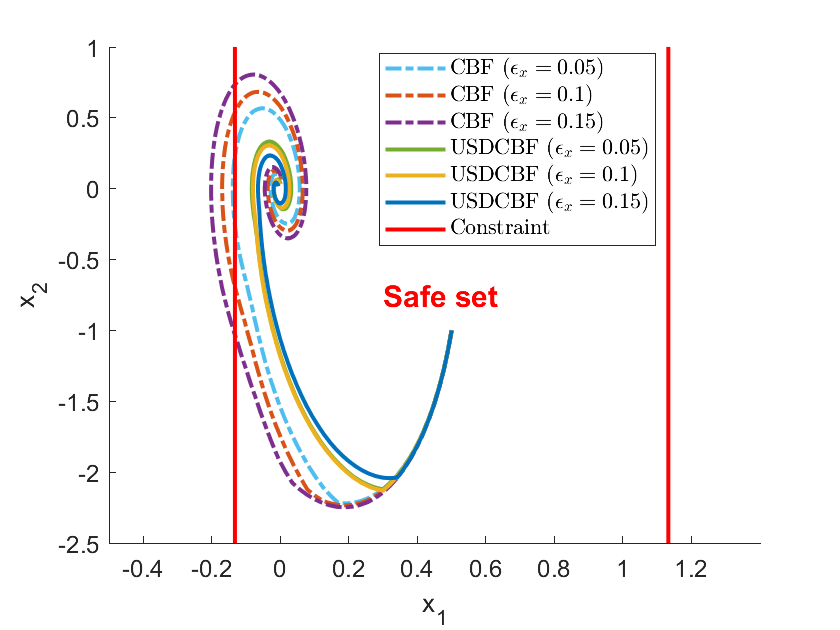} 
\caption{Simulation result for Example \ref{example2}. Trajectories of the mass-spring-damper system with the CBF controller and the USDCBF controller are shown for actuation uncertainty $\epsilon_x=0.1$ and three measurement uncertainties $\epsilon_x=0.05,0.1,0.15$. The sampled-data controller by solving the CBP-QP with constraint  \eqref{eqsampled} can not ensure safety, while the USDCBF-QP controller ensures safety for all three values of $\epsilon_x$. }\label{exmp2_x1_x2}
\end{figure}

\begin{example}\label{example2}
Consider the mass-spring-damper system: 
\begin{align*}
    \dot x_1 &= x_2,\\
    \dot x_2 &= -\frac{1}{m}(kx_1 + bx_2-u),
\end{align*}
where $k = 50$, $b = 3$ and $m = 1.5$. Consider the safe set $\mathcal{C} = \{x \in \R^2 : h(x) \geq 0\}$ where  $h(x) = 10 - \frac{k}{2}(x_1 - 0.5)^2$, which has a relative degree 2. This safe set corresponds to an upper and lower bound on the state $x_1$. 
The input set is $U = \{u\mid -10\leq u \leq 10\}$, the periodic sampling time is 0.02 seconds, the vector ${\bf a} = [20,100]^{\top}$ in Proposition \ref{prop:uncertain}, the nominal controller is $u_{nom}(x) = -\frac{3}{2}(x_1 + x_2)$ and the initial condition is $x_0 = [0.5,-1]^{\top}$. We choose the actuation uncertainty bound $\epsilon_u = 0.1$ and 
three different measurement uncertainty bounds $\epsilon_x=0.05,0.1,0.15$. Fig. \ref{exmp2_x1_x2} shows trajectories of the system with the CBF controller and the USDCBF controller for three values of $\epsilon_x$. It can be observed that in the presence of measurement and actuation uncertainties, trajectories with the CBF controller will violate the safety constraint, and the violation is larger as the state measurement uncertainty becomes larger. In contrast, trajectories with the USDCBF controller respect the safety constraint for any value of $\epsilon_x$. Also note that the USDCBF controller tends to be more conservative when the state measurement uncertainty $\epsilon_x$ is larger, which is expected. 
The average computation time at sampling times is around 0.02 seconds on the same computer as in Example \ref{exmp1}.

\end{example}

\begin{example}\label{example3}

This example presents the experimental results that implements the USDCBF controller on a Crazyflie Nano Quadcopter \cite{PX4}. 
We consider the following linearized 6-dimension quadcopter model:
\begin{equation}\label{model}
\dot{\x}=\left[\begin{array}{ll}
0_{3\times 3} & I_{3\times 3} \\
0_{3\times 3} & 0_{3\times 3}
\end{array}\right] \x+\left[\begin{array}{l}
0_{3\times 3}\\
I_{3\times 3}
\end{array}\right] \ub.    
\end{equation}
where $\x=[x\quad y\quad z\quad \dot{x}\quad \dot{y}\quad \dot{z}]^{T}$ is the state and  $\ubold = [\ddot{x}\quad \ddot{y}\quad \ddot{z}]^{\top}$ is the virtual input.
The model shown in \eqref{model} is usually referred to as the double-integrator quadcopter model and is widely used in quadcopter simulations \cite{xu2018safe,greeff2018flatness}. 
Define the safe set $\mathcal{C} = \{\x \in \R^6 : \mathbf{h}(\x) \geq \mathbf{0}\}$ with
\begin{align*}
    \mathbf{h}(\x) & = [h_1(\x),\; h_2(\x),\; h_3(\x),\;h_4(\x),\; h_5(\x),\; h_6(\x) ]^{\top} \\
    & = [\bar x -x ,\; x-\underline x ,\; \bar y -y ,\; y-\underline y ,\; \bar z -z ,\; z-\underline z]^{\top}
\end{align*}
where $\bar x = 0.5,\ \bar y=0.5,\ \bar z = 0.6$ and $\underline x=-0.5,\ \underline y=-0.5,\ \underline z = 0$ are the upper bounds and lower bounds on the position of the quadcopter respectively. It's easy to check that $h_i(\x),\; i=1,..,6$, are all CBFs with relative degree 2. Therefore, the SDCBF condition for $i=1,...,6$ is given by 
$
L_f^2 h_i(\x)+L_g L_f h_i(\x) \ub + \mathbf{a}^{\top} \eta_i (\x) + \phi_i(\x,\Delta t) \geq 0,\;
$ where $\eta_i (\x)= [L_f h_i(\x), h_i(\x)]^{\top}$ and $\mathbf{a}= [6, 8]^{\top}$. We use a linear quadratic regulator controller as the nominal controller to track a given reference trajectory and apply the SDCBF condition to keep the quadcopter in the safe set $\mathcal{C}$. 


We choose $\epsilon_x = 0.02$ and $\epsilon_u = 0.01$. The quadcopter flies for about 20 seconds to follow the reference trajectory starting from the origin.  We implement both CBF and USDCBF controllers in the quadcopter experiments, with the frequency of the control input signal set to 50Hz and 100Hz (the periodic sampling time is 0.02 and 0.01 seconds respectively). 
Fig. \ref{exmp3} illustrates the reference trajectory and the quadcopter trajectories with two types of CBF-based controllers. Although most of the trajectory using the CBF controller is inside the constraining box (the safe set $\mathcal{C}$), $\mathbf{h}(\x)\geq \mathbf{0}$ is violated at some extreme points in Fig. \ref{exmp3-2} at frequency 50Hz. In contrast, the trajectory using the USDCBF controller remains in the constraining box for both frequencies for all time. The experimental results show that the USDCBF controller can guarantee safety under measurement and actuation uncertainties, which is necessary for the quadcopter and other safety-critical robotic applications. In this example, because $h_i (i=1,...,6)$ and the system dynamics are both linear, the polynomial optimization problem becomes a linear optimization problem which can be efficiently solved by linear programming solvers via simplex or interior-point methods. The average computation time is 0.005 seconds at each sampling time.


  %

\begin{figure}[!t]
\centering
\begin{subfigure}[b]{0.3\textwidth}
    \includegraphics[width=0.99\textwidth]{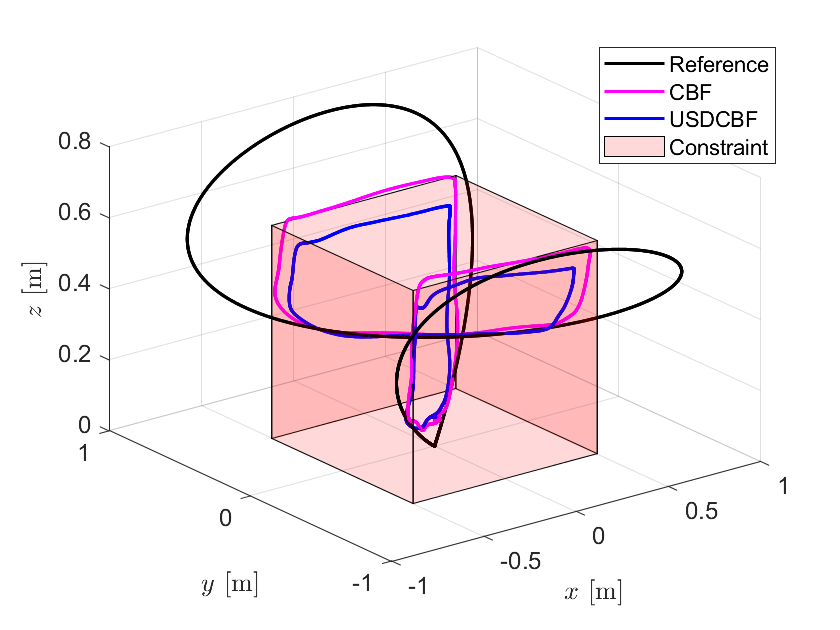}
    \caption{Experimental results of Example \ref{example3} when controller frequency is 50 Hz}
    \label{quad3d50}
  \end{subfigure}
  \begin{subfigure}[b]{0.3\textwidth}
    \includegraphics[width=0.99\textwidth]{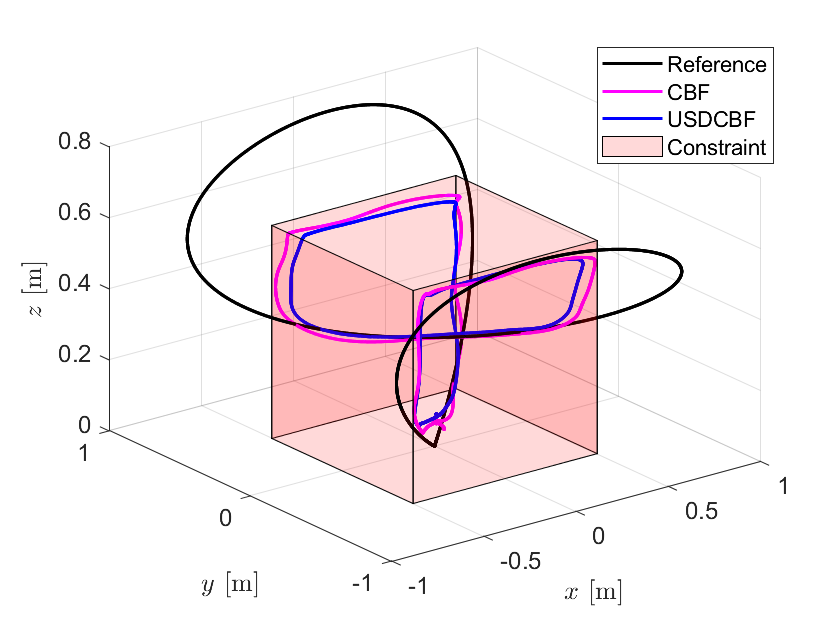}
    \caption{Experimental results of Example \ref{example3} when controller frequency is 100 Hz}
    \label{quad3d100}
  \end{subfigure}
  \caption{Experimental results for Example \ref{example3}. Trajectories of Crazyflie with the CBF controller (pink) and the USDCBF controller (blue) are shown with measurement and actuation uncertainties $\epsilon_x=0.02$ and $\epsilon_u=0.01$ for two controller frequencies. Reference trajectories are shown in black.}\label{exmp3}
\end{figure}

\begin{figure}[!ht]
\centering
    \includegraphics[width=0.43\textwidth]{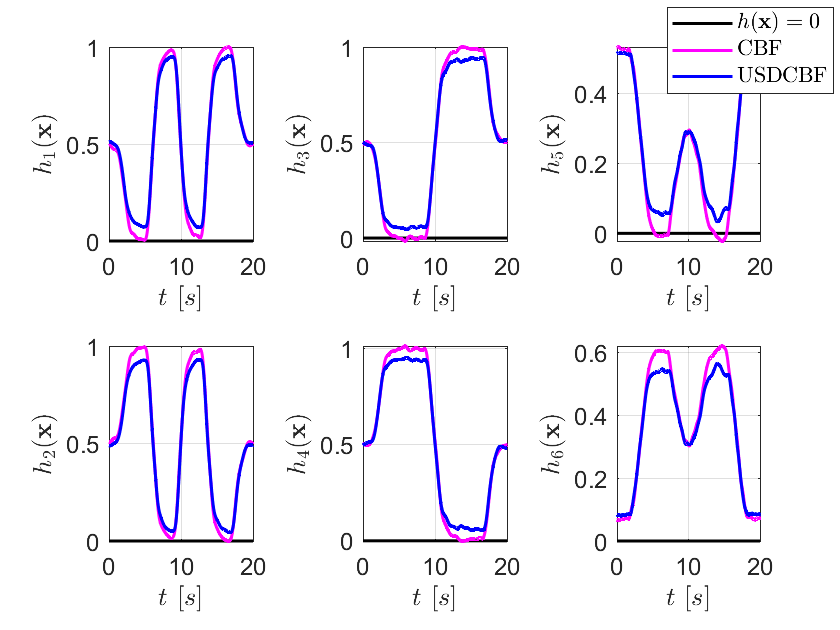}
  \caption{Evolution of CBF $\mathbf{h}(\x)$ in Example \ref{example3} at 50Hz. The USDCBF-QP controller respects all safety constraints while the CBF controller induces safety violations.}\label{exmp3-2}
\end{figure}

  %
  

\end{example}

\section{Conclusion} \label{conclude}

In this paper, we proposed a framework that can guarantee the safe control of sampled-data systems with measurement and actuation uncertainties. Comparing with the traditional CBF condition for continuous-time systems, the proposed SDCBF condition includes an additional term $\phi(x_k,\Delta_k)$ which can be efficiently solved by computing the lower bound of a Taylor polynomial using reachable tube approximation and polynomial optimization techniques. We proved that the SDCBF-QP controller can guarantee the safety constraint in continuous time for sampled-data systems with perfect information. We also showed that the USDCBF-QP controller can ensure safety with inaccurate state estimation and imperfect actuation. The performance of the proposed method was demonstrated via simulations and hardware experiments on the quadcopter. Future work includes developing more efficient and less conservative methods for reachable tube approximation by utilizing parallel computation and applying the proposed framework to more robotic applications.
\bibliographystyle{IEEEtran}
\bibliography{sampledref,intervalbib,reachbib}

\begin{thebibliography}{10}
\providecommand{\url}[1]{#1}
\csname url@samestyle\endcsname
\providecommand{\newblock}{\relax}
\providecommand{\bibinfo}[2]{#2}
\providecommand{\BIBentrySTDinterwordspacing}{\spaceskip=0pt\relax}
\providecommand{\BIBentryALTinterwordstretchfactor}{4}
\providecommand{\BIBentryALTinterwordspacing}{\spaceskip=\fontdimen2\font plus
\BIBentryALTinterwordstretchfactor\fontdimen3\font minus
  \fontdimen4\font\relax}
\providecommand{\BIBforeignlanguage}[2]{{%
\expandafter\ifx\csname l@#1\endcsname\relax
\typeout{** WARNING: IEEEtran.bst: No hyphenation pattern has been}%
\typeout{** loaded for the language `#1'. Using the pattern for}%
\typeout{** the default language instead.}%
\else
\language=\csname l@#1\endcsname
\fi
#2}}
\providecommand{\BIBdecl}{\relax}
\BIBdecl

\bibitem{blanchini2015book}
F.~Blanchini and S.~Miani, \emph{Set-theoretic methods in control (2nd
  edition)}.\hskip 1em plus 0.5em minus 0.4em\relax Springer, 2015.

\bibitem{aubin2009viability}
J.-P. Aubin, \emph{Viability theory}.\hskip 1em plus 0.5em minus 0.4em\relax
  Springer Science \& Business Media, 2009.

\bibitem{prajna2007framework}
S.~Prajna, A.~Jadbabaie, and G.~Pappas, ``A framework for worst-case and
  stochastic safety verification using barrier certificates,'' \emph{IEEE
  Transactions on Automatic Control}, vol.~52, no.~8, pp. 1415--1428, 2007.

\bibitem{prajna2004safety}
S.~Prajna and A.~Jadbabaie, ``Safety verification of hybrid systems using
  barrier certificates,'' in \emph{Hybrid Systems: Computation and Control},
  2004, pp. 477--492.

\bibitem{wolff2005invariance}
J.~Wolff and M.~Buss, ``Invariance control design for constrained nonlinear
  systems,'' in \emph{Proceedings of the 16th IFAC World Congress}, vol.~38,
  2005, pp. 37--42.

\bibitem{kimmel2017invariance}
M.~Kimmel and S.~Hirche, ``Invariance control for safe human--robot interaction
  in dynamic environments,'' \emph{IEEE Transactions on Robotics}, vol.~33,
  no.~6, pp. 1327--1342, 2017.

\bibitem{aaron2014cbf}
A.~Ames, J.~Grizzle, and P.~Tabuada, ``Control barrier function based quadratic
  programs with application to adaptive cruise control,'' in \emph{IEEE
  Conference on Decision and Control}, 2014, pp. 6271--6278.

\bibitem{Xu2015ADHS}
X.~Xu, P.~Tabuada, A.~Ames, and J.~Grizzle, ``Robustness of control barrier
  functions for safety critical control,'' in \emph{IFAC Conference on Analysis
  and Design of Hybrid Systems}, vol.~48, no.~27, 2015, pp. 54--61.

\bibitem{aaron2016barriertac}
A.~Ames, X.~Xu, J.~Grizzle, and P.~Tabuada, ``Control barrier function based
  quadratic programs for safety critical systems,'' \emph{IEEE Transactions on
  Automatic Control}, vol.~62, no.~8, pp. 3861--3876, 2017.

\bibitem{xu2017correctness}
X.~Xu, J.~W. Grizzle, P.~Tabuada, and A.~D. Ames, ``Correctness guarantees for
  the composition of lane keeping and adaptive cruise control,'' \emph{IEEE
  Transactions on Automation Science and Engineering}, vol.~15, no.~3, pp.
  1216--1229, 2017.

\bibitem{Hsu15Backstepping}
S.~Hsu, X.~Xu, and A.~Ames, ``Control barrier functions based quadratic
  programs with application to bipedal robotic walking,'' in \emph{American
  Control Conference}, 2015, pp. 4542--4548.

\bibitem{nguyen2016optimal}
Q.~Nguyen and K.~Sreenath, ``Optimal robust time-varying safety-critical
  control with application to dynamic walking on moving stepping stones,'' in
  \emph{ASME Dynamic Systems and Control Conference}.\hskip 1em plus 0.5em
  minus 0.4em\relax ASME Digital Collection, 2016.

\bibitem{wang2018safe}
L.~Wang, E.~A. Theodorou, and M.~Egerstedt, ``Safe learning of quadrotor
  dynamics using barrier certificates,'' in \emph{IEEE International Conference
  on Robotics and Automation (ICRA)}.\hskip 1em plus 0.5em minus 0.4em\relax
  IEEE, 2018, pp. 2460--2465.

\bibitem{cortez2021control}
W.~S. Cortez, D.~Oetomo, C.~Manzie, and P.~Choong, ``Control barrier functions
  for mechanical systems: Theory and application to robotic grasping,''
  \emph{IEEE Transactions on Control Systems Technology}, pp. 530--545, 2021.

\bibitem{pickem2017robotarium}
D.~Pickem, P.~Glotfelter, L.~Wang, M.~Mote, A.~Ames, E.~Feron, and
  M.~Egerstedt, ``The robotarium: A remotely accessible swarm robotics research
  testbed,'' in \emph{IEEE International Conference on Robotics and Automation
  (ICRA)}.\hskip 1em plus 0.5em minus 0.4em\relax IEEE, 2017, pp. 1699--1706.

\bibitem{ghaffari2018safety}
A.~Ghaffari, I.~Abel, D.~Ricketts, S.~Lerner, and M.~Krsti{\'c}, ``Safety
  verification using barrier certificates with application to double integrator
  with input saturation and zero-order hold,'' in \emph{American Control
  Conference}.\hskip 1em plus 0.5em minus 0.4em\relax IEEE, 2018, pp.
  4664--4669.

\bibitem{yang2019self}
G.~Yang, C.~Belta, and R.~Tron, ``Self-triggered control for safety critical
  systems using control barrier functions,'' in \emph{American Control
  Conference}.\hskip 1em plus 0.5em minus 0.4em\relax IEEE, 2019, pp.
  4454--4459.

\bibitem{gurriet2019realizable}
T.~Gurriet, P.~Nilsson, A.~Singletary, and A.~D. Ames, ``Realizable set
  invariance conditions for cyber-physical systems,'' in \emph{American Control
  Conference}.\hskip 1em plus 0.5em minus 0.4em\relax IEEE, 2019, pp.
  3642--3649.

\bibitem{singletary2020control}
A.~Singletary, Y.~Chen, and A.~D. Ames, ``Control barrier functions for
  sampled-data systems with input delays,'' in \emph{59th IEEE Conference on
  Decision and Control (CDC)}.\hskip 1em plus 0.5em minus 0.4em\relax IEEE,
  2020, pp. 804--809.

\bibitem{breeden2021control}
J.~Breeden, K.~Garg, and D.~Panagou, ``Control barrier functions in
  sampled-data systems,'' \emph{IEEE Control Systems Letters}, pp. 367--372,
  2021.

\bibitem{dean2020guaranteeing}
S.~Dean, A.~J. Taylor, R.~K. Cosner, B.~Recht, and A.~D. Ames, ``Guaranteeing
  safety of learned perception modules via measurement-robust control barrier
  functions,'' \emph{arXiv:2010.16001}, 2020.

\bibitem{takano2018robust}
R.~Takano and M.~Yamakita, ``Robust constrained stabilization control using
  control lyapunov and control barrier function in the presence of measurement
  noises,'' in \emph{Conference on Control Technology and Applications
  (CCTA)}.\hskip 1em plus 0.5em minus 0.4em\relax IEEE, 2018, pp. 300--305.

\bibitem{nguyen2016exponential}
Q.~Nguyen and K.~Sreenath, ``Exponential control barrier functions for
  enforcing high relative-degree safety-critical constraints,'' in \emph{2016
  American Control Conference (ACC)}.\hskip 1em plus 0.5em minus 0.4em\relax
  IEEE, 2016, pp. 322--328.

\bibitem{moore1966interval}
R.~E. Moore, \emph{Interval analysis}.\hskip 1em plus 0.5em minus 0.4em\relax
  Prentice-Hall, 1966.

\bibitem{jaulin2001interval}
L.~Jaulin, M.~Kieffer, O.~Didrit, and E.~Walter, \emph{Applied interval
  analysis}.\hskip 1em plus 0.5em minus 0.4em\relax Springer, 2006.

\bibitem{moore2009introduction}
R.~E. Moore, R.~B. Kearfott, and M.~J. Cloud, \emph{Introduction to interval
  analysis}.\hskip 1em plus 0.5em minus 0.4em\relax SIAM, 2009.

\bibitem{makino2003taylor}
K.~Makino and M.~Berz, ``Taylor models and other validated functional inclusion
  methods,'' \emph{International Journal of Pure and Applied Mathematics},
  vol.~6, pp. 239--316, 2003.

\bibitem{berz1998computation}
M.~Berz and G.~Hoffst{\"a}tter, ``Computation and application of taylor
  polynomials with interval remainder bounds,'' \emph{Reliable Computing},
  vol.~4, no.~1, pp. 83--97, 1998.

\bibitem{makino2009rigorous}
K.~Makino and M.~Berz, ``Rigorous integration of flows and odes using taylor
  models,'' in \emph{Proceedings of the 2009 conference on Symbolic Numeric
  Computation}, 2009, pp. 79--84.

\bibitem{althoff2008reachability}
M.~Althoff, O.~Stursberg, and M.~Buss, ``Reachability analysis of nonlinear
  systems with uncertain parameters using conservative linearization,'' in
  \emph{47th IEEE Conference on Decision and Control}.\hskip 1em plus 0.5em
  minus 0.4em\relax IEEE, 2008, pp. 4042--4048.

\bibitem{anjos2011handbook}
M.~F. Anjos and J.~B. Lasserre, \emph{Handbook on semidefinite, conic and
  polynomial optimization}.\hskip 1em plus 0.5em minus 0.4em\relax Springer
  Science \& Business Media, 2011, vol. 166.

\bibitem{lasserre2002semidefinite}
J.~B. Lasserre, ``Semidefinite programming vs. {LP} relaxations for polynomial
  programming,'' \emph{Mathematics of operations research}, vol.~27, no.~2, pp.
  347--360, 2002.

\bibitem{lasserre2001global}
------, ``Global optimization with polynomials and the problem of moments,''
  \emph{SIAM Journal on Optimization}, vol.~11, no.~3, pp. 796--817, 2001.

\bibitem{waki2008algorithm}
H.~Waki, S.~Kim, M.~Kojima, M.~Muramatsu, and H.~Sugimoto, ``Algorithm 883:
  Sparsepop---a sparse semidefinite programming relaxation of polynomial
  optimization problems,'' \emph{ACM Transactions on Mathematical Software
  (TOMS)}, vol.~35, no.~2, pp. 1--13, 2008.

\bibitem{mosek}
\BIBentryALTinterwordspacing
MOSEK, \emph{The MOSEK optimization toolbox for MATLAB manual. Version 9.3},
  2021. [Online]. Available: \url{http://docs.mosek.com/9.3/toolbox/index.html}
\BIBentrySTDinterwordspacing

\bibitem{wachter2006implementation}
A.~W{\"a}chter and L.~T. Biegler, ``On the implementation of an interior-point
  filter line-search algorithm for large-scale nonlinear programming,''
  \emph{Mathematical programming}, vol. 106, no.~1, pp. 25--57, 2006.

\bibitem{PX4}
\BIBentryALTinterwordspacing
``Px4 open source project.'' [Online]. Available:
  \url{https://docs.px4.io/v1.12/en/complete\_vehicles/crazyflie21.html}
\BIBentrySTDinterwordspacing

\bibitem{jankovic2018robust}
M.~Jankovic, ``Robust control barrier functions for constrained stabilization
  of nonlinear systems,'' \emph{Automatica}, vol.~96, pp. 359--367, 2018.

\bibitem{xu2018safe}
B.~Xu and K.~Sreenath, ``Safe teleoperation of dynamic uavs through control
  barrier functions,'' in \emph{IEEE International Conference on Robotics and
  Automation}.\hskip 1em plus 0.5em minus 0.4em\relax IEEE, 2018, pp.
  7848--7855.

\bibitem{greeff2018flatness}
M.~Greeff and A.~P. Schoellig, ``Flatness-based model predictive control for
  quadrotor trajectory tracking,'' in \emph{IEEE/RSJ International Conference
  on Intelligent Robots and Systems}.\hskip 1em plus 0.5em minus 0.4em\relax
  IEEE, 2018, pp. 6740--6745.

\end{thebibliography}

\end{document}